\newcommand{\xequal}[2][]{\ext@arrow 0055{\equalfill@}{#1}{#2}}
\def\equalfill@{\arrowfill@\Relbar\Relbar\Relbar}
\newcommand{\maru}[1]{\raise0.2ex\hbox{\textcircled{\scriptsize{#1}}}}
\def\opn#1#2{\def#1{\operatorname{#2}}} 
	\opn\chara{char} \opn\length{\ell} \opn\pd{pd} \opn\rk{rk}
	\opn\projdim{proj\,dim} \opn\injdim{inj\,dim} \opn\rank{rank}
	\opn\depth{depth} \opn\grade{grade} \opn\height{height}
	\opn\embdim{emb\,dim} \opn\codim{codim}
	\opn\Cl{Cl}
	\opn\Tr{Tr} \opn\bigrank{big\,rank}
	\opn\superheight{superheight}\opn\lcm{lcm}
	\opn\trdeg{tr\,deg}
	\opn\rdeg{rdeg}
	\opn\reg{reg} \opn\lreg{lreg} \opn\ini{in} \opn\lpd{lpd}
	\opn\size{size} \opn\sdepth{sdepth}
	\opn\link{link}\opn\fdepth{fdepth}\opn\lex{lex}
	\opn\tr{tr}
	\opn\type{type}
	\opn\gap{gap}
	\opn\arithdeg{arith-deg}
	\opn\revlex{revlex}
	\opn\div{div} \opn\Div{Div} \opn\cl{cl} \opn\Cl{Cl}
	\opn\Spec{Spec} \opn\Supp{Supp} \opn\supp{supp} \opn\Sing{Sing}
	\opn\Ass{Ass} \opn\Min{Min}\opn\Mon{Mon}
	\opn\Ann{Ann} \opn\Rad{Rad} \opn\Soc{Soc}
	\opn\Im{Im} \opn\Ker{Ker} \opn\Coker{Coker} \opn\Am{Am}
	\opn\Hom{Hom} \opn\Tor{Tor} \opn\Ext{Ext} \opn\End{End}
	\opn\Aut{Aut} \opn\id{id}
	\opn\nat{nat}
	\opn\pff{pf}
	\opn\Pf{Pf} \opn\GL{GL} \opn\SL{SL} \opn\mod{mod} \opn\ord{ord}
	\opn\Gin{Gin} \opn\Hilb{Hilb}\opn\sort{sort}
	\opn\PF{PF}\opn\Ap{Ap}
	\opn\mult{mult}
	\opn\bight{bight}
	\opn\div{div}
	\opn\Div{Div}
	\opn\aff{aff}
	\opn\relint{relint} \opn\st{st}
	\opn\lk{lk} \opn\cn{cn} \opn\core{core} \opn\vol{vol}  \opn\inp{inp} 
	\opn\nilpot{nilpot}
	\opn\link{link} \opn\star{star}\opn\lex{lex}\opn\set{set}
	\opn\width{wd}
	\opn\Fr{F}
	\opn\QF{QF}
	\opn\G{G}
	\opn\type{type}\opn\res{res}
	\opn\conv{conv}
	\opn\Int{Int}
	\opn\Deg{Deg}
	\opn\Sym{Sym}
	\opn\Con{Con}
	\opn\gr{gr}
	\def\pot#1#2{#1[\kern-0.28ex[#2]\kern-0.28ex]}
	\opn\dirlim{\underrightarrow{\lim}}
	\opn\inivlim{\underleftarrow{\lim}}
	\def\Implies{\ifmmode\Longrightarrow \else
		\unskip${}\Longrightarrow{}$\ignorespaces\fi}
	\def\implies{\ifmmode\Rightarrow \else
		\unskip${}\Rightarrow{}$\ignorespaces\fi}
	\def\iff{\ifmmode\Longleftrightarrow \else
		\unskip${}\Longleftrightarrow{}$\ignorespaces\fi}
	\newtheorem{Theorem}{Theorem}[section]
	\newtheorem{Lemma}[Theorem]{Lemma}
	\theoremstyle{definition}
	\newtheorem{Example}[Theorem]{Example}
	\let\epsilon\varepsilon
	\let\kappa=\varkappa
	\opn\dis{dis}
	\def\pnt{{\raise0.5mm\hbox{\large\bf.}}}
	\opn\Lex{Lex}
\begin{document}

\title[van der Waerden complexes with linear resolution]{A classification of van der Waerden complexes with linear resolution}
\thanks{Version: March 5, 2025}

\author[T. Hibi]{Takayuki Hibi}
\address{Department of Pure and Applied Mathematics, Graduate School
of Information Science and Technology, Osaka University, Suita, Osaka
565-0871, Japan}
\email{hibi@math.sci.osaka-u.ac.jp}

\author[A. Van Tuyl]{Adam Van Tuyl}
\address{Department of Mathematics and Statistics\\
McMaster University, Hamilton, ON, L8S 4L8, Canada}
\email{vantuyla@mcmaster.ca}

\keywords{van der Waerden complex, Cohen--Macaulay complex, linear resolution, level ring}
\subjclass[2010]{05E40, 13H10}
 
\begin{abstract}
In 2017, Ehrenborg, Govindaiah, Park, 
and Readdy defined the
van der Waerden complex ${\tt vdW}(n,k)$ to be the simplicial
complex whose facets correspond to all
the arithmetic sequences on the set $\{1,\ldots,n\}$ 
of a fixed length $k$.  To complement a classification of the Cohen--Macaulay van der Waerden complexes obtained by Hooper and Van Tuyl in 2019, a classification of van der Waerden complexes with linear resolution is presented.  Furthermore, we show that the Stanley--Reisner ring of a Cohen--Macaulay van der Waerden complex is level.
\end{abstract}

\maketitle
\thispagestyle{empty} 

\section*{Introduction}
Fix integers $0 < k < n$.  The {\it van der Waerden}
complex ${\tt vdW}(n,k)$ is the simplicial
complex on the vertex set $\{1,\ldots,n\}$
whose facets are all the arithmetic 
sequences on $\{1,\ldots,n\}$ of length $k$.  As
an example
$${\tt vdW}(7,3) = \langle 
\{1,2,3,4\}, \{2,3,4,5\},\{3,4,5,6\},\{4,5,6,7\},
\{1,3,5,7\} \rangle.$$
Note the length $k$ refers to the number of steps
in the sequence.  By definition,
${\tt vdW}(n,k)$ is a pure simplicial complex
with $\dim {\tt vdW}(n,k) = k.$
  The name
of the complex is inspired by 
van der Waerden's work \cite{VDW1927} on Ramsey Theory that states
for any given two positive integers $k$ and $r$, there 
is an integer $M = M(k,r)$ such that for any $n \geq M$,
if the integers $\{1,\ldots,n\}$ are coloured with
$r$ colours, then there is an arithmetic sequence of length
$k$ that is mono-coloured.

The complexes ${\tt vdW}(n,k)$  were
introduced in 2017 by Ehrenborg, Govindaiah, Park and 
Readdy \cite{EGPR2017} who studied the topology
of these complexes.  Interestingly, the size
of the homotopy type of these complexes is bounded
in terms of the number of distinct prime factors 
of all integers less than or equal to $k$, making
these complexes of interest in number theory. 
Hooper and Van Tuyl \cite{HVT2019} later classified
for what values $n$ and $k$ the complex ${\tt vdW}(n,k)$
is a vertex decomposable, shellable, or 
Cohen--Macaulay simplicial complex (see Theorem \ref{AVT}).  
In fact,
${\tt vdW}(n,k)$ either has all three properties,
or it has none of these properties.

The purpose of the present paper is to revisit
the classification of Hooper and Van Tuyl \cite{HVT2019}
and to complement their result with considering linear resolutions and level rings.  In particular, we classify  
the ${\tt vdW}(n,k)$ whose Stanley--Reisner
ideals have linear resolution (Theorem \ref{linear}) and show that the Stanley--Reisner ring of a Cohen--Macaulay van der Waerden complex is level (Theorem \ref{level}). 

\section{Linear resolution}
A classification of van der Waerden complexes with linear resolution is presented.  First we recall fundamental materials from \cite{HHgtm260}.  

Let $\Delta$ be a simplicial complex.  A vertex $x$ of $\Delta$ is called {\em free} if $x$ belongs to exactly one facet.  A facet $F$ of $\Delta$ is said to be a {\em leaf} of $\Delta$ if there exists a facet $G$ of $\Delta$ with $G \neq F$, called a {\em branch} of $F$, for which $H \cap F \subset G \cap F$ for all facets $H$ with $H \neq F$.  Each vertex belonging to $F \setminus G$ is free.  A labelling $F_1, \ldots, F_s$ of the facets of $\Delta$ is called a {\em leaf order} if, for each $1 < i \leq s$, the facet $F_i$ is a leaf of the subcomplex $\langle F_1, \ldots, F_{i}\rangle$.  A simplicial complex that has a leaf order is called a {\em quasi-forest}.  Recall from \cite[Theorem 9.2.12]{HHgtm260} that a simplicil complex $\Delta$ is a quai-forest if and only if $\Delta$ is the clique complex of a chordal graph.  Furthermore, it is known by work of Fr\"oberg (e.g., see \cite[Theorem 9.2.3]{HHgtm260}) that the Stanley--Reisner ideal of the clique complex of a finite graph $G$ has linear resolution if and only if $G$ is a chordal graph.   We say an ideal $I$ has {\it linear 
resolution} if all the generators of $I$ have the 
same degree $d \geq 1$ for some integer $d$
and $\beta_{i,i+j}(I) = 0$ for all $j \neq d$.

\begin{Theorem}
\label{linear}
Let $0 < k < n$ be integers.  Then the Stanley--Reisner ideal of ${\tt vdW}(n,k)$ has linear resolution if and only if either $k=1$ or $\frac{n}{2} \leq k < n$.    
\end{Theorem}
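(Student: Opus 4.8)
The plan is to treat the two implications separately; the ``if'' direction reduces to two short arguments, and the ``only if'' direction, which I prove in contrapositive form, is where the real work lies. For the ``if'' direction: if $k=1$ the facets of ${\tt vdW}(n,1)$ are exactly the $2$-element subsets of $\{1,\dots,n\}$, so the minimal non-faces are precisely the $3$-element subsets and $I_{{\tt vdW}(n,1)}$ is the squarefree Veronese ideal generated by all squarefree monomials of degree $3$, which is well known to have a linear resolution (indeed it has linear quotients). If $\tfrac n2\le k<n$, then any facet $\{a,a+d,\dots,a+kd\}$ satisfies $kd\le n-1<2k$, forcing $d=1$, so the facets are exactly the intervals $\{a,\dots,a+k\}$ with $1\le a\le n-k$. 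I would then check that ${\tt vdW}(n,k)$ equals the clique complex of the graph $G$ on $\{1,\dots,n\}$ with $i\sim j\iff|i-j|\le k$, and that $n,n-1,\dots,1$ is a perfect elimination ordering of $G$ (among the later vertices, the neighbours of each vertex form a set of at most $k$ consecutive integers, hence a clique); thus $G$ is chordal, and the Fr\"oberg result quoted above gives that $I_{{\tt vdW}(n,k)}$ has a linear resolution.

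For the ``only if'' direction, suppose $1<k<n/2$, so that $n\ge 2k+1$, and put $I:=I_{{\tt vdW}(n,k)}$. The first observation is that any facet containing the vertex $1$ must start at $1$; a short computation then shows that $\{1,n\}$ lies in a facet exactly when $k\mid n-1$ and $\{2,n\}$ lies in a facet exactly when $k\mid n-2$. Since $k>1$ these cannot both hold, so one of $\{1,n\},\{2,n\}$ is a $2$-element non-face, hence a degree-$2$ minimal generator of $I$. Consequently, if $I$ had a linear resolution then all of its generators would have degree $2$, so ${\tt vdW}(n,k)$ would be a flag complex and, by Fr\"oberg, its $1$-skeleton $G$ would be chordal. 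It remains to contradict this.

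Let $p$ denote the least prime factor of $k+1$, and note that $\{1,2,k+2\}$ lies in no facet, because $\{1,\dots,k+1\}$ is the only facet through both $1$ and $2$. If $k$ is odd, or if $k$ is even and $n\ge 1+kp$, then $\{1,k+2\}$ does lie in a facet---namely $\{1,3,\dots,2k+1\}$ when $k+2$ is odd, and $\{1,1+p,\dots,1+kp\}$ when $k$ is even with $n\ge 1+kp$---and as $\{1,2\}$ and $\{2,k+2\}$ also lie in interval facets, $\{1,2,k+2\}$ is then a $3$-element minimal non-face; since $I$ now has minimal generators of degrees $2$ and $3$, it has no linear resolution. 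Otherwise $k$ is even and $2k+1\le n\le kp$ (an interval that is nonempty since $kp\ge 3k\ge 2k+1$), and here I would show that $\{1,2,k+2,k+3\}$ induces a $4$-cycle in $G$: the pairs $\{1,2\},\{2,k+2\},\{k+2,k+3\}$ lie in interval facets and $\{1,k+3\}\subseteq\{1,3,\dots,2k+1\}$ because $k+3$ is odd, while any facet through $\{1,k+2\}$ or through $\{2,k+3\}$ would require common difference $d\mid k+1$ with $d\ge 2$, hence $d\ge p$, and the resulting length-$k$ progression cannot fit inside $\{1,\dots,n\}$ because $n\le kp$. An induced $4$-cycle is not chordal, giving the required contradiction.

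I expect the last case---$k$ even with $2k+1\le n\le kp$---to be the main obstacle: one must exhibit an explicit four-element set whose induced subgraph in $G$ is a $4$-cycle, and verifying exactly which of its pairs do or do not span a facet requires a careful accounting of which length-$k$ arithmetic progressions fit inside $\{1,\dots,n\}$, as indicated above. Everything else is routine or already in hand: that an ideal with a linear resolution is generated in a single degree, and Fr\"oberg's characterization of chordality.
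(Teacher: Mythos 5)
Your proof is correct, but in the ``only if'' direction it takes a genuinely different route from the paper. The ``if'' direction is essentially the same idea: the paper realizes ${\tt vdW}(n,k)$ as a quasi-forest via a leaf order, whereas you exhibit a perfect elimination ordering of the $1$-skeleton; both reduce to Fr\"oberg's theorem. For the ``only if'' direction, the paper proves a standalone lemma (its Lemma~\ref{minimalnonfaces}) producing explicit degree-$2$ and degree-$3$ minimal non-faces by splitting on the quantity $d=\lfloor(n-1)/k\rfloor$ (cases $d\nmid k$, $d\mid k$ with $d<k$, and $d=k$), and since that lemma requires $n\geq 7$, the paper must handle $(5,2)$ and $(6,2)$ separately by observing they are clique complexes of non-chordal graphs. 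Your argument instead uses a completely different case split organized around the parity of $k$ and the least prime factor $p$ of $k+1$: you first extract a degree-$2$ generator from $\{1,n\}$ or $\{2,n\}$, then either exhibit the degree-$3$ minimal non-face $\{1,2,k+2\}$ (when $k$ is odd, or $k$ is even with $n\geq 1+kp$), or show the vertices $\{1,2,k+2,k+3\}$ induce a $4$-cycle in the $1$-skeleton and invoke Fr\"oberg in the other direction (when $k$ is even with $n\leq kp$). I checked the arithmetic: $\{1,k+2\}$ and $\{2,k+3\}$ are indeed non-edges when $n\leq kp$ since any common difference would be a divisor $\geq p$ of $k+1$, and the parity of $k+3$ puts $\{1,k+3\}$ inside $\{1,3,\ldots,2k+1\}$. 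A nice feature of your route is that $(5,2)$ and $(6,2)$ fall naturally into the last case ($k=2$, $p=3$, $5\leq n\leq 6$) rather than needing to be treated as exceptions; the price is that the number-theoretic bookkeeping with $p$ is a bit heavier than the paper's lemma. Both are valid.
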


\begin{proof}
{\bf (If)} Let $k = 1$.  Then ${\tt vdW}(n,1)$ is the complete graph on $n$ vertices.  Thus its Stanley--Reisner ideal is generated by all monomials of degree $3$.  Hence it has a linear resolution (\cite[Problem 8.7]{HHgtm260}). 

Let $\frac{n}{2} \leq k < n$.  Then the facets of ${\tt vdW}(n,k)$ are $F_i=\{i, i+1, \ldots, i+k\}$ with $1 \leq i < i+k \leq n$.  It then follows that the ordering $F_1, \ldots, F_{n-k}$ is a leaf order and that ${\tt vdW}(n,k)$ is a quasi-forest.  Hence the Stanley--Reisner ideal of ${\tt vdW}(n,k)$ has a linear resolution.

{\bf (Only If)} Let $1 < k < \frac{n}{2}$ with $n \geq 7$.  Lemma \ref{minimalnonfaces} below guarantees that the Stanley--Reisner ideal of ${\tt vdW}(n,k)$ does not have linear resolution.  Finally, since each of ${\tt vdW}(6,2)$ and ${\tt vdW}(5,2)$ (see
Figure \ref{fig.vdw5262}) is the clique complex of a non-chordal graph, its Stanley--Reisner ideal does not have linear resolution
(also Example \ref{ex}).
\end{proof}

\begin{figure}[ht]
\begin{tikzpicture}[scale=1.5]
\coordinate (1) at (0,2);
\coordinate (2) at (2,2);
\coordinate (3) at (1,1);
\coordinate (4) at (2,0);
\coordinate (5) at (0,0);
\fill(1)circle(0.7mm);
\fill(2)circle(0.7mm);
\fill(3)circle(0.7mm);
\fill(4)circle(0.7mm);
\fill(5)circle(0.7mm);
\draw (1)node[above=1mm]{$1$}--(2)node[above=1mm]{$2$}--(4)node[below=1mm]{$4$}--(5)node[below=1mm]{$5$}--cycle;
\draw (1)--(3)node[above=1mm]{$3$}--(4);
\draw(2)--(3)--(5);
\end{tikzpicture}
\hspace{3cm}
\begin{tikzpicture}[scale=1.5]
\coordinate (2) at (0,2);
\coordinate (6) at (2,2);
\coordinate (5) at (2,0);
\coordinate (1) at (0,0);
\coordinate (3) at ($(1)!1/3!(6)$);
\coordinate (4) at ($(1)!2/3!(6)$);
\fill(1)circle(0.7mm);
\fill(2)circle(0.7mm);
\fill(3)circle(0.7mm);
\fill(5)circle(0.7mm);
\fill(6)circle(0.7mm);
\fill(4)circle(0.7mm);
\draw (1)node[below=1mm]{$1$}--(2)node[above=1mm]{$2$}--(6)node[above=1mm]{$6$}--(5)node[below=1mm]{$5$}--cycle;
\draw (1)--(3)node[below=1mm]{$3$}--(4)node[above=1mm]{$4$}--(6);
\draw(2)--(3)--(5);
\draw(2)--(4)--(5);
\end{tikzpicture}
\caption{${\tt vdW}(5,2)$ and ${\tt vdW}(6,2)$}\label{fig.vdw5262}
\end{figure}
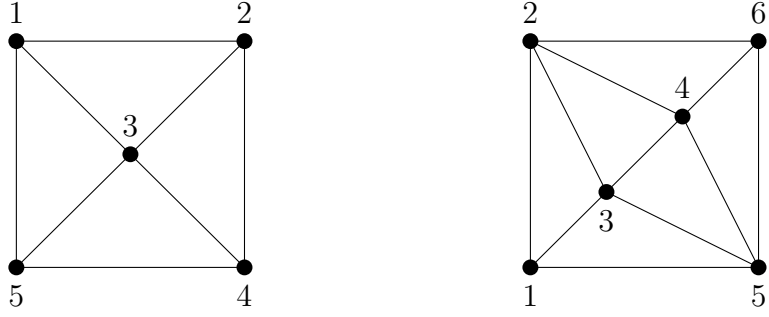

\begin{Lemma}\label{minimalnonfaces}
    Let $1 < k < \frac{n}{2}$ with $n
    \geq 7$ and let $d$ be the largest integer such
    that $1+kd \leq n$. Then the simplicial
    complex ${\tt vdW}(n,k)$ has the following
    minimal non-faces:
    \begin{enumerate}
        \item $\{1,kd\}$; and 
        \item $\{1,1+k(d-1),1+kd\}$ if $d \nmid k$.
        \item $\{1,1+(k-1)(d-1),1+(k-1)d\}$ if $d|k$ and $d <k$.
        \item $\{1,1+(k-2)d, 1+(k-1)(d-1)\}$ if $d=k$.
    \end{enumerate}
Consequently, the Stanley--Reisner
ideal of ${\tt vdW}(n,k)$ has
minimal generators of degree two and three.
\end{Lemma}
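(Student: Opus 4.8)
The plan is to argue directly from the combinatorial description of faces: a subset $S\subseteq\{1,\dots,n\}$ is a face of ${\tt vdW}(n,k)$ precisely when $S$ lies in some arithmetic sequence $\{a,a+b,a+2b,\dots,a+kb\}$ with integers $a\ge 1$, $b\ge 1$, $a+kb\le n$. The structural fact to isolate first is: if $1\in S$, then any such sequence has $a=1$ (as $a\ge 1$ and the common difference is positive, $1$ can only be the initial term), and the constraint $1+kb\le n$ together with the maximality of $d$ forces $b\le d$. So for a set containing the vertex $1$, membership in a facet reduces to choosing a single difference $b\in\{1,\dots,d\}$. I would also note at the outset that $d\ge 2$ under the hypotheses $1<k<\frac{n}{2}$, $n\ge 7$ (then $n\ge 2k+1$, so $1+2k\le n$), and that every singleton is a face, since each vertex lies on a run of $k+1$ consecutive integers inside $\{1,\dots,n\}$ (the interval of admissible starting points being nonempty as $k<n$).

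Part (1) is then immediate. First $1<kd\le n$ (here $kd\le n-1$ by the definition of $d$, and $kd\ge 2k\ge 4$), so $\{1,kd\}$ is a genuine pair of distinct vertices. If it were a face, then $kd=1+jb$ for some $1\le j\le k$ and $b\le d$, so $jb=kd-1$; since $b\le d$ this gives $j\ge(kd-1)/d>k-1$, hence $j=k$, so $k(d-b)=1$, impossible as $k\ge 2$. Minimality holds because the proper subsets of $\{1,kd\}$ are singletons (and the empty set), hence faces.

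Parts (2)--(4) follow the same template. Each listed triple has the shape $\{1,\,1+x,\,1+y\}$ with $0<x<y$, and one first checks that it consists of three distinct vertices of $\{1,\dots,n\}$ (using $d\ge 2$, and, in case (4), that $d=k$ together with $n\ge 7$ forces $k\ge 3$). If such a triple were a face it would lie in $\{1,1+b,\dots,1+kb\}$ with $b\le d$, so $x=ib$ and $y=jb$ for integers $1\le i<j\le k$, and in particular $b\mid(y-x)$. The cases are arranged so that $y-x$ equals $k$, $k-1$, or $1$ respectively, and this, combined with $j\le k$ and the divisibility hypothesis ($d\nmid k$; $d\mid k$ with $d<k$; $d=k$), produces a contradiction. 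For example, in case (2) one gets $b\mid k$, say $k=mb$; then $y=kd=jb$ gives $j=md$, and $j\le k=mb$ forces $d\le b$, so $b=d$ and $d\mid k$, against hypothesis; cases (3) and (4) are similar one-line divisibility arguments. For minimality I would exhibit, for each of the three $2$-element subsets, an arithmetic sequence of length $k$ inside $\{1,\dots,n\}$ containing it: the two subsets through $1$ sit inside sequences of difference $d$ or $d-1$ (respectively $k-1$ or $k-2$ in case (4)), which fit because $1+kd\le n$; the subset avoiding $1$ has its two elements differing by $k$, $k-1$, or $1$, hence lies in a run of $k+1$ consecutive integers, which again fits since $1+kd\le n$.

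The closing sentence then follows formally: a minimal non-face of cardinality $c$ is precisely a squarefree minimal generator of degree $c$ of the Stanley--Reisner ideal, so (1) provides a degree-$2$ generator while exactly one of (2)--(4) applies (the three conditions on $d$ versus $k$ are mutually exclusive and exhaustive) and provides a degree-$3$ generator. I expect the only genuine obstacle to be bookkeeping: keeping the three degree-$3$ cases straight, confirming in each that the displayed triple has three distinct entries in $\{1,\dots,n\}$, and checking the small boundary inequalities that make the hypotheses $1<k<\frac{n}{2}$ and $n\ge 7$ effective. Once the ``$b\le d$'' reduction is in place, every non-face assertion collapses to a short congruence or divisibility computation.
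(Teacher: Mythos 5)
Your proposal is correct and follows essentially the same strategy as the paper: both reduce the question to the facets $F_j=\{1,1+j,\dots,1+kj\}$ with $j\le d$ containing the vertex $1$, and then argue via divisibility of differences. Your treatment of cases (2)--(4) is slightly more unified than the paper's (you observe the common template $\{1,1+x,1+y\}$ and impose $b\mid(y-x)$, whereas the paper first pins the candidate facet down to $F_d$ and then tests the remaining element), but this is a matter of bookkeeping rather than a different route. One small slip: in your minimality sketch the parenthetical ``(respectively $k-1$ or $k-2$ in case (4))'' should read ``$k$ or $k-1$,'' since there $d=k$ so the two pairs through $1$ still sit in $F_d$ and $F_{d-1}$ of differences $d=k$ and $d-1=k-1$; this does not affect the validity of the argument.
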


\begin{proof}
    The final statement follows from the definition of a Stanley--Reisner
    ideal.   
    
    Observe that if $d$ is the largest integer
    such that $1+kd\leq n$, then 
    $2 \leq d$ since $k < \frac{n}{2}$ 
    implies $2k < n$, or equivalently, 
    $1+2k \leq n$.  We introduce the following notation:
    \begin{equation*}\label{facetswithone}
    F_j = \{1,1+1\cdot j, 1+2\cdot j,\ldots, 1+k\cdot j\}
    ~~\mbox{for $j=1\ldots,d$.}
    \end{equation*}
    The sets $F_1,\ldots,F_d$ are all the facets of ${\tt vdW}(n,k)$
    that contain $1$.  Note that if $x,y \in F_j$, then
    $|x-y|$ is a multiple of $j$.
    
    $(1)$ 
    Since $d \geq 2$, we have $1+(k-1)d < kd
    < 1+kd$.  So $kd$ does not appear
    in the the facet $F_d$.  Furthermore,
    we have $1+kj < dk$ for all $j=1,\ldots,
    d-1$.  Indeed, if $1+kj \geq kd$ for
    some $j$, then $1 \geq k(d-j)$, with
    $d-j \geq 1$ and $k>1$, a contradiction.
    So $\{1,kd\}$ does not appear in  any
    facet of ${\tt vdW}(n,k)$.  It is clear
    that this set is a minimal non-face.

$(2)$ Suppose that $d\nmid k$.  The only facet
that contains $\{1,1+kd\}$ is $F_d$.  Moreover, $1+k(d-1) \not\in F_d$.
If it was, then $|(1+k(d-1))-(1+kd)|=k$ is a multiple of $d$, which
is not true.   So $\{1,1+k(d-1),1+dk\}$ is a non-face.  As we already
observed, $\{1,1+kd\}$ is a subset of $F_d$, and $\{1,1+k(d-1)\}$
is a subset of $F_{d-1}$.  Finally, $\{1+k(d-1), 1+kd\}$ belongs
to the facet
$$\{1+k(d-1),1+k(d-1)+1,\ldots,1+k(d-1)+k\}$$
since $1+k(d-1)+k = 1+kd$.  So $\{1,1+k(d-1),1+kd\}$ is a minimal
non-face of ${\tt vdW}(n,k)$.

$(3)$ Suppose that $d\mid k$ and $d<k$.  We have $\{1,1+(k-1)d\}$ 
as a subset of $F_d$ and $\{1,1+(k-1)(d-1)\}$ a subset of $F_{d-1}$.
Also, $\{1+(k-1)(d-1),1+(k-1)d\}$ is an element of the
facet
$$\{1+(k-1)(d-1),1+(k-1)(d-1)+1,\ldots,1+(k-1)(d-1)+(k-1),1+(k-1)(d-1)+k\}.$$
Indeed, the second last element of this set is $1+(k-1)d$.

Since $d < k$, we have 
$$1+(k-1)(d-1) < 1+k(d-1) <  1+kd-d = 1+(k-1)d.$$
So $1+(k-1)d$ only appears in the facet $F_d$, but not in
$F_1,\ldots,F_{d-1}$. Indeed, $1+k(d-1)$ is the 
largest element of $F_{d-1}$. On the other hand,
$[1+(k-1)d]-[1+(k-1)(d-1)]=k-1$.  So, if
$1+(k-1)(d-1)$ was in $F_d$,
we would have $d|(k-1)$.  But $d|k$, and thus $d|(k-(k-1))=1$.
This implies $2 \leq d <1$, a contradiction.
So $\{1,1+(k-1)(d-1),1+(k-1)d\}$ is a minimal non-face, as desired.

$(4)$ Suppose that $d=k$.  In this case $\{1,1+(k-2)d\}$ is a
subset of $F_d$ and $\{1,1+(k-1)(d-1)\}$ is a subset of $F_{d-1}$.
We also have $\{1+(k-1)(d-1),1+(k-2)d\}$ is a subset 
of the facet
$$\{1+(k-2)d,1+(k-2)d+1,1+(k-2)d+2,\ldots,1+(k-2)d+k\}$$
Indeed, the term $1+(k-2)d+1$ that appear in this facet is equal
to $1+dk-2d+1 = 1+k^2-2k+1 = 1+ (k-1)(k-1) = 1+(k-1)(d-1)$, where
we are using the fact that $k=d$.    

Finally, we need to
show that $\{1,1+(k-2)d,1+(k-1)(d-1)\}$ is a non-face. 
Since $k=d$,  one has $|(1+(k-2)d) - (1+(k-1)(d-1))| = |(1+(d-2)d)-(1+(d-1)(d-1)|
= 1$.  Thus if $\{1,1+(k-2)d,1+(k-1)(d-1)\}$ is a face, then it is a subset of $F_1$.  Since $1+(k-1)(d-1) \geq 1 + (k-1) = k$, if it is a subset of $F_1$, then $k = d = 2$.  Since $n \geq 7$, if $k=2$, then $d = 3$.  Hence $k = d = 2$ cannot happen.
\end{proof}

\section{Level rings}
The following classification is \cite[Theorem 1.1]{HVT2019}.  
\begin{Theorem} 
\label{AVT}
Fix integers $0 < k < n$.  Then  the following
are equivalent:
\begin{enumerate}
    \item $n \leq 6$, or $n \geq 7$ and $k=1$ or
    $\frac{n}{2} \leq k < n$; 
    \item ${\tt vdW}(n,k)$ is vertex decomposable;
    \item ${\tt vdW}(n,k)$ is shellable;
    \item ${\tt vdW}(n,k)$ is Cohen--Macaulay;
\end{enumerate}
\end{Theorem}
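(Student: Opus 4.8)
The plan is to close the cycle of implications $(1)\Rightarrow(2)\Rightarrow(3)\Rightarrow(4)\Rightarrow(1)$. The two middle implications carry no information special to ${\tt vdW}(n,k)$: every vertex decomposable complex is shellable and every shellable complex is Cohen--Macaulay over an arbitrary field, so $(2)\Rightarrow(3)\Rightarrow(4)$ is immediate. All the real content is in $(1)\Rightarrow(2)$ and in $(4)\Rightarrow(1)$.

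For $(1)\Rightarrow(2)$ I would induct on $n$ and treat the three cases of condition~(1) separately. If $k=1$, then ${\tt vdW}(n,1)$ is the complete graph $K_n$ regarded as a one-dimensional complex; every vertex is a shedding vertex whose link is a set of $n-1$ isolated points and whose deletion is $K_{n-1}$, so vertex decomposability follows by induction. If $\frac{n}{2}\le k<n$, then the facets are exactly the intervals $F_i=\{i,i+1,\dots,i+k\}$ with $1\le i\le n-k$, which is the observation already used in the proof of Theorem~\ref{linear}; here the vertex $n$ lies in $F_{n-k}$ alone, its link is a simplex, and after deleting $n$ the remnant $F_{n-k}\setminus\{n\}$ is absorbed by $F_{n-k-1}$, so the deletion is ${\tt vdW}(n-1,k)$ and induction applies again. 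If $n\le 6$, only finitely many complexes arise, and apart from the cases $k=1$ and $\frac{n}{2}\le k<n$ already handled the only ones left are ${\tt vdW}(5,2)$ and ${\tt vdW}(6,2)$ of Figure~\ref{fig.vdw5262}; for each of these I would shed the vertex $1$, whose link is a path and whose deletion is a smaller van der Waerden complex already dealt with.

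For $(4)\Rightarrow(1)$ I would argue the contrapositive: if $n\ge 7$ and $1<k<\frac{n}{2}$, then ${\tt vdW}(n,k)$ is \emph{not} Cohen--Macaulay. If ${\tt vdW}(n,k)$ were Cohen--Macaulay, then the link of every face $\sigma$ with $\dim\link\sigma\ge 1$ would be connected (this is Reisner's criterion, applied to the link, which is again Cohen--Macaulay), so it is enough to exhibit one face $\sigma$ with $\dim\link\sigma\ge 1$ whose link is disconnected. With $d=\lfloor(n-1)/k\rfloor\ge 2$ as in Lemma~\ref{minimalnonfaces}, the facets through the vertex $1$ are $F_j=\{1,1+j,\dots,1+kj\}$ for $j=1,\dots,d$, and $\link\{1\}$ is the union of the $(k-1)$-simplices $F_j\setminus\{1\}$; the minimal non-faces pinned down in Lemma~\ref{minimalnonfaces} are precisely the combinatorial shadow of such a $\sigma$, since the degree-three minimal non-face exhibited there consists of the vertex $1$ together with two vertices of $\link\{1\}$ that are non-adjacent in $\link\{1\}$ although they span an edge of ${\tt vdW}(n,k)$. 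Tracing this through --- passing from $\link\{1\}$ to the link of a suitable face containing $1$ when $\link\{1\}$ itself happens to be connected --- should produce the desired positive-dimensional disconnected link.

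The main obstacle is exactly this last step. As in Lemma~\ref{minimalnonfaces}, locating $\sigma$ and verifying the disconnectedness uniformly forces the case distinction into $d\nmid k$, $d\mid k$ with $d<k$, and $d=k$, and one must check separately that the hypothesis $n\ge 7$ rules out the small degenerate configurations: those in which $\link\{1\}$ is already connected so that one has to descend one level, and those in which the candidate link is only zero-dimensional and hence does not obstruct Cohen--Macaulayness. The remaining ingredients --- the finitely many base cases of the induction in $(1)\Rightarrow(2)$ and the standard implications $(2)\Rightarrow(3)\Rightarrow(4)$ --- are routine.
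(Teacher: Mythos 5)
The paper does not prove Theorem~\ref{AVT}; it cites it verbatim as \cite[Theorem~1.1]{HVT2019}, so there is no internal proof to measure your sketch against. Judged on its own terms, the skeleton you lay out is the natural one: $(2)\Rightarrow(3)\Rightarrow(4)$ is standard, and the $(1)\Rightarrow(2)$ induction is essentially workable. For $\frac{n}{2}\le k<n$ the facets are the intervals $F_i=\{i,\dots,i+k\}$, the endpoint $n$ is a shedding vertex with simplicial link, deleting it gives ${\tt vdW}(n-1,k)$ (one should add the base case $k=n-1$, where the complex is a single simplex, so the induction stays inside the allowed range $(n-1)/2\le k\le n-2$), and the two residual small complexes ${\tt vdW}(5,2)$ and ${\tt vdW}(6,2)$ can indeed be shed at the vertex $1$ in that order, landing on ${\tt vdW}(4,2)$ and ${\tt vdW}(5,2)$ respectively, with links that are paths.

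The genuine gap is in $(4)\Rightarrow(1)$, exactly where you flag it. You claim the degree-three minimal non-faces of Lemma~\ref{minimalnonfaces} are ``precisely the combinatorial shadow'' of a disconnected link, but this is not so: a minimal non-face $\{1,a,b\}$ only says $a$ and $b$ are non-adjacent in $\link\{1\}$, and non-adjacency is far from disconnectedness, since $a$ and $b$ may still lie in one component via a longer path. For example with $(n,k)=(7,3)$ one has $d=2$ and $\link\{1\}=\langle\{2,3,4\},\{3,5,7\}\rangle$, which is connected; one must descend to $\link\{1,3\}=\langle\{2,4\},\{5,7\}\rangle$ to exhibit a disconnected link of positive dimension. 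Your sketch gestures at this descent (``passing from $\link\{1\}$ to the link of a suitable face containing~$1$'') and at the necessary case split on $d\nmid k$, $d\mid k$ with $d<k$, and $d=k$, but never actually identifies the face $\sigma$ in each case, nor verifies that $\link\sigma$ is disconnected and of dimension $\ge 1$, nor rules out the low-$n$ degeneracies. That verification \emph{is} the content of $(4)\Rightarrow(1)$; as written the implication is asserted, not proved. The strategy does appear salvageable --- taking $\sigma$ to be the intersection of two suitably chosen facets through the vertex $1$ seems to work in each pattern --- but until the case analysis is carried out the argument is incomplete.
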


We now come to show the Stanley--Reisner rings of all 
Cohen--Macaulay van der
Waerden complexes are level. 
Recall that we say that $R/I$ is {\it level} 
if $R/I$ is Cohen--Macaulay
and the last column of the Betti table of $R/I$ contains a
single entry. The level ring $R/I$ is {\it Gorenstein} 
if the only entry in
the last column is $1$.

\begin{Example}\label{ex}
    If $I$ is the Stanley--Reisner ideal of ${\tt vdW}(5,2)$,
    its Betti table is
    $$\begin{matrix}
 & 0 & 1 & 2\\
\text{total:} & 1 & 2 & 1\\
0: & 1 & . & .\\
1: & . & 2 & .\\
2: & . & . & 1
\end{matrix}$$
For the van der Waerden complex ${\tt vdW}(6,2)$ the 
Betti table of the Stanley--Reisner ideal is
$$\begin{matrix}
 & 0 & 1 & 2 & 3\\
\text{total:} & 1 & 4 & 5 & 2\\
0: & 1 & . & . & .\\
1: & . & 4 & 2 & .\\
2: & . & . & 3 & 2
\end{matrix}$$
We see that neither ideal has linear resolution.   However,
both are level, and ${\tt vdW}(5,2)$ is Gorenstein. 
 In
fact, since the Stanley--Reisner ideal of ${\tt vdW}(5,2)$ is
$\langle x_1x_4, x_2x_5 \rangle$ in $k[x_1,\ldots,x_5]$,
the Stanley--Reisner ideal is a complete intersection.
 Note that these Betti tables 
show that Lemma \ref{minimalnonfaces} cannot be extended to $n <7$ since both of
these ideals are only generated in degree 2.
\end{Example}

\begin{Theorem}
\label{level}
Suppose that ${\tt vdW}(n,k)$ is Cohen--Macaulay.  Then, the Stanley--Reisner ring of  ${\tt vdW}(n,k)$ is level.  Furthermore, ${\tt vdW}(n,k)$ is Gorenstein if and only if $(n,k) = (5,2).$  
\end{Theorem}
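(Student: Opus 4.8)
The plan is to reduce to the Cohen--Macaulay cases listed in Theorem~\ref{AVT}, namely $n\le 6$, or $n\ge 7$ with $k=1$ or $\frac{n}{2}\le k<n$, and to treat separately the three families that arise: the sporadic small cases, the complete-graph case $k=1$, and the interval case $\frac{n}{2}\le k<n$. The engine for the levelness statement is the following elementary observation about Betti tables: if $J\subseteq R=k[x_1,\dots,x_n]$ is a nonzero monomial ideal generated in a single degree $d$ whose minimal free resolution is $d$-linear, and if $R/J$ is Cohen--Macaulay, then $R/J$ is level. Indeed $\beta_{i,j}(J)=0$ unless $j=i+d$, so the identities $\beta_{i,j}(R/J)=\beta_{i-1,j}(J)$ for $i\ge 1$ force the only possibly nonzero graded Betti numbers of $R/J$ to be $\beta_{0,0}=1$ and $\beta_{i,\,i+d-1}$ with $1\le i\le p$, where $p=\pd R/J=n-\dim R/J$; hence the last column (homological degree $p$) of the Betti table of $R/J$ contains the single entry $\beta_{p,\,p+d-1}(R/J)$, which is precisely the assertion that $R/J$ is level.

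For the levelness claim I would then observe that comparing Theorem~\ref{AVT} with Theorem~\ref{linear} shows that whenever $n\ge 7$, the complex ${\tt vdW}(n,k)$ is Cohen--Macaulay if and only if its Stanley--Reisner ideal has a linear resolution (the two conditions $k=1$ and $\frac{n}{2}\le k<n$ occur in both); the same equivalence holds for $n\le 6$ except for $(n,k)=(5,2)$ and $(6,2)$, since by Theorem~\ref{linear} these are the only Cohen--Macaulay cases with $1<k<\frac{n}{2}$. So for every Cohen--Macaulay ${\tt vdW}(n,k)$ other than ${\tt vdW}(5,2)$ and ${\tt vdW}(6,2)$ the displayed observation applies (with $d=3$ when $k=1$ and $d=2$ when $\frac{n}{2}\le k<n$) and yields levelness at once; for the remaining two complexes the Betti tables recorded in Example~\ref{ex} each have a single entry in their last column, so these are level as well.

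For the Gorenstein statement I would use that a level ring $R/I$ is Gorenstein precisely when the unique entry $\beta_p(R/I)$ in the last column equals $1$, i.e.\ when the Cohen--Macaulay type is $1$; thus it suffices to compute this top total Betti number across the Cohen--Macaulay cases. For ${\tt vdW}(5,2)$ the Stanley--Reisner ideal is the complete intersection $\langle x_1x_4,x_2x_5\rangle$ (Example~\ref{ex}), so the type is $1$. For $k=1$ the ideal is generated by all squarefree monomials of degree $3$ in $n$ variables, i.e.\ $R/I$ is the Stanley--Reisner ring of the $1$-skeleton of the simplex; its minimal free resolution is classical (Hochster's formula, or the uniform-matroid computation), so that $\pd R/I=n-2$ and the top Betti number is $\binom{n-1}{2}$. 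For $\frac{n}{2}\le k<n$ one first records, as in the proof of Theorem~\ref{linear}, that every facet of ${\tt vdW}(n,k)$ is the interval $\{i,i+1,\dots,i+k\}$, so a set is a face if and only if its diameter is at most $k$; hence $I_{{\tt vdW}(n,k)}$ is the edge ideal of the graph on $\{1,\dots,n\}$ joining $i$ and $j$ when $|i-j|>k$, which is a staircase-type bipartite graph on $2(n-k-1)$ vertices together with isolated vertices, and one computes its type from this explicit description, or by an iterated mapping-cone argument along the leaf order $F_1,\dots,F_{n-k}$. The upshot of running through these computations is that ${\tt vdW}(5,2)$ is the only Cohen--Macaulay van der Waerden complex of Cohen--Macaulay type $1$.

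I expect the main obstacle to be exactly this last point: pinning down the Cohen--Macaulay type of the interval complexes ${\tt vdW}(n,k)$ with $\frac{n}{2}\le k<n$ precisely, rather than merely bounding it. The cleanest route is probably a downward induction on the number of facets $n-k$, peeling off the leaf $F_{n-k}$ and using the mapping cone of the corresponding inclusion of Stanley--Reisner ideals (equivalently, exploiting the quasi-forest structure furnished by Theorem~\ref{linear}); tracking how the top-degree syzygies propagate through the cone should determine $\beta_p$ and finish the argument. By comparison the $k=1$ case is routine once the resolution of the squarefree Veronese ideal is invoked, and the sporadic cases $(5,2)$ and $(6,2)$ are entirely covered by Example~\ref{ex}.
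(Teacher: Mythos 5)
Your levelness argument is essentially the paper's, just with the bookkeeping spelled out: use Theorems~\ref{AVT} and~\ref{linear} to see that every Cohen--Macaulay ${\tt vdW}(n,k)$ except ${\tt vdW}(5,2)$ and ${\tt vdW}(6,2)$ has a Stanley--Reisner ideal with linear resolution, note that a Cohen--Macaulay quotient whose defining ideal has a linear resolution is automatically level, and read off Example~\ref{ex} for the two sporadic cases. That part of your proposal is correct and matches the paper.

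For the Gorenstein claim you diverge, and this is where your proposal has a real gap. You propose to compute the Cohen--Macaulay type directly in each of the three families and detect when it equals $1$. That is a legitimate strategy in principle, but you do not execute it for the interval family $\frac{n}{2}\leq k<n$: you sketch a mapping-cone/quasi-forest induction and then flag it yourself as ``the main obstacle,'' leaving the conclusion hanging. The paper sidesteps all type computations with a much shorter observation: a graded Gorenstein quotient has a self-dual minimal free resolution, so its Betti table is symmetric, and a symmetric Betti table is incompatible with a strictly linear resolution once the resolution has length at least two. Therefore none of the linear-resolution cases can be Gorenstein, and it only remains to inspect the Betti tables of ${\tt vdW}(5,2)$ and ${\tt vdW}(6,2)$, which is Example~\ref{ex}. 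That symmetry argument is the key idea absent from your write-up; with it, the Gorenstein part is a one-line add-on rather than a chain of resolution computations. If you instead pursue your own route, you genuinely have to finish the interval-case type computation, and you should be careful there: when $I_{{\tt vdW}(n,k)}$ happens to be principal (which occurs at the boundary of the interval family), the quotient is a hypersurface, hence has projective dimension $1$, and then the symmetry-versus-linearity obstruction also evaporates, so these extreme cases need to be isolated and treated on their own before you can assert that ${\tt vdW}(5,2)$ is the unique Gorenstein instance.
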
 

\begin{proof}
Suppose that ${\tt vdwW}(n,k)$ is Cohen--Macaulay.
 By Theorems \ref{linear} and \ref{AVT}, except for 
 ${\tt vdW}(5,2)$ and ${\tt vdW}(6,2)$, the Stanley--Reisner ring of ${\tt vdW}(n,k)$ has linear resolution.  In particular it is level, but not Gorenstein (the Betti table
 of a Gorenstein ring must be symmetric, so it
 cannot have a linear resolution). On the other hand, it can be shown by directly
 computing the Betti diagrams
 as in Example \ref{ex} that ${\tt vdW}(5,2)$ is Gorenstein and ${\tt vdW}(6,2)$ is level, but not Gorenstein.  
\end{proof}

\subsection*{Acknowledgments}
Van Tuyl’s research is supported by NSERC Discovery Grant 2024-05299.
\bibliographystyle{amsplain}
\bibliography{bibliography}

\end{document}